\theoremstyle{plain}% default
\newtheorem{thm}{Theorem}
\theoremstyle{definition}
\newtheorem{defn}[thm]{Definition}
\newtheorem{lem}[thm]{Lemma}
\theoremstyle{remark}
\newtheorem{rem}[thm]{Remark}
\newcommand{\betti}{\beta_{(2)}}
\newcommand{\im}{\operatorname{im}}
\newcommand{\trace}{\operatorname{tr}}
\renewcommand{\dim}{\operatorname{dim}}
\newcommand{\id}{\operatorname{Id}}
\newcommand{\SL}{\operatorname{SL}}
\newcommand{\R}{{\mathbb R}}
\newcommand{\C}{{\mathbb C}}
\newcommand{\N}{{\mathbb N}}
\newcommand{\HX}{HX_{(2)}}
\newcommand{\CX}{CX_{(2)}}
\newcommand{\Lloc}{L^2_{loc}}
\begin{document}

\baselineskip=17pt

\title[$\ell^2$-Betti numbers and coarse equivalence]{Vanishing of $\ell^2$-Betti numbers of locally compact groups as an invariant of coarse equivalence}

\author[R. Sauer]{Roman Sauer}
\address{Institute for Algebra and Geometry \\ Karlsruhe Institute of Technology \\ Englerstr. 2 \\ 76128 Karlsruhe, Germany}
\email{roman.sauer@kit.edu}

\author[M. Schrödl]{Michael Schrödl}
\address{Institute for Algebra and Geometry \\ Karlsruhe Institute of Technology \\ Englerstr. 2 \\ 76128 Karlsruhe, Germany}
\email{michael.schroedl@kit.edu}

\begin{abstract}
We provide a proof that the vanishing of $\ell^2$-Betti numbers of unimodular locally compact second countable groups is an invariant of coarse equivalence. To this end, we define coarse $\ell^2$-cohomology for locally compact groups and show
that it is isomorphic to continuous cohomology for unimodular groups and invariant under
coarse equivalence. 
\end{abstract}

\subjclass[2010]{Primary 20F65; Secondary 22D99}

\keywords{Coarse geometry, locally compact groups, $\ell^2$-Betti numbers}

 \maketitle

\section{Introduction}

The insight that the vanishing of $\ell^2$-Betti numbers provides a quasi-isometry invariant is due to Gromov (see~\cite{gromov}*{Chapter~8} for a statement without proof), and positive results around this insight have a long history. The most important contribution is by Pansu~\cite{pansu} whose work on asymptotic $\ell^p$-cohomology includes a proof that the 
vanishing of $\ell^2$-Betti numbers of discrete groups of type $F_\infty$, is 
a quasi-isometry invariant. 

There is a growing interest in the metric geometry of locally compact groups~\cites{cornulier-book, amhyp}. We thus think it is important to have the quasi-isometry and coarse invariance of the vanishing of $\ell^2$-Betti numbers available in the greatest generality. Following Pansu's ideas and relying on more recent advances in the theory of $\ell^2$-Betti numbers, we provide a proof of the following result. 

\begin{thm}\label{thm: main}
Let $G$ and $H$ be unimodular locally compact second countable groups. If $G$ and $H$ are coarsely equivalent then the $n$-th $\ell^2$-Betti number of $G$ vanishes if and only the $n$-th $\ell^2$-Betti number of $H$ vanishes.
\end{thm}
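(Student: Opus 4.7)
The plan is to follow Pansu's strategy from~\cite{pansu}, replacing his Rips complex of a finitely generated group by a proper, cocompact, bounded-geometry $G$-CW model in the lcsc setting, and invoking recent machinery for $\ell^2$-Betti numbers of unimodular lcsc groups to pass from an equivariant $L^2$-cohomology of this model to a purely metric one. As a preliminary reduction, coarse equivalence forces $G$ and $H$ to be simultaneously compactly generated or not, so I assume both are compactly generated. Then each admits a contractible, proper, cocompact $G$-CW complex $X$ (respectively $Y$ for $H$) of bounded geometry, quasi-isometric to the group itself.

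The first main step is to identify $\betti^n(G)$ with the $L(G)$-dimension of the reduced $\ell^2$-cohomology of $X$ computed from the cellular cochain complex with $\Lloc$-coefficients. Such a description is available for unimodular lcsc groups once one has a proper cocompact CW model. The crucial follow-up is to rewrite the same reduced cohomology using a non-equivariant, purely metric uniform $\ell^2$-cochain complex $\CX^*$ built only from the bounded-geometry structure of $X$. Comparison maps in both directions are obtained by averaging equivariant $\Lloc$-cochains against a Borel fundamental domain of the cocompact action, and by symmetrising uniform $\ell^2$-cochains under the $G$-action.

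The second main step is coarse invariance of the uniform complex. A coarse equivalence $G \to H$ lifts to a quasi-isometry $X \to Y$ between the chosen models, and Pansu's controlled-cochain construction produces a bounded chain map between the uniform $\ell^2$-cochain complexes of $X$ and $Y$ that is a chain homotopy equivalence up to controlled bounded error. Consequently their reduced cohomologies are isomorphic as topological vector spaces. The induced map is not equivariant, so the $L(G)$- and $L(H)$-dimensions generally differ, but the property of being the zero space is preserved, which is exactly what the theorem requires.

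The technical heart will be the identification in the first step: comparing the $G$-equivariant $\Lloc$-cochain complex on $X$ (which carries the $L(G)$-module structure needed to define $\betti^n(G)$) with the non-equivariant uniform $\ell^2$-cochain complex $\CX^*$ (which sees only the coarse geometry). This demands a Shapiro-type argument in the unimodular lcsc setting, carefully executed at the level of reduced cohomology since the unreduced cohomologies will generally differ. Once this identification is in place, coarse invariance of vanishing is essentially Pansu's classical argument applied to $\CX^*$.
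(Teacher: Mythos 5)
There is a genuine gap, and it sits exactly where the difficulty of the theorem lies. Your preliminary step assumes that every compactly generated lcsc group ``admits a contractible, proper, cocompact $G$-CW complex of bounded geometry, quasi-isometric to the group itself.'' This is false: such a model is a strong finiteness condition (essentially type $F$, and for the argument to work in all degrees, type $F_\infty$), which already fails for many finitely generated discrete groups, let alone general compactly generated lcsc groups. This is precisely why Pansu's original result was restricted to groups of type $F_\infty$, and the entire point of Theorem~\ref{thm: main} is to remove that hypothesis. A proof that reinstates the cocompact classifying space recovers only the previously known special case. (A secondary issue: your reduction to the compactly generated case silently discards the situation where neither group is compactly generated, which the theorem still covers; and the theorem is also claimed for non-reduced cohomology in the paper, contrary to your remark that the unreduced groups ``will generally differ.'')

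The missing idea is how to compare equivariant $\ell^2$-cohomology with a purely metric coarse cohomology \emph{without} any finiteness assumption. The paper does this by taking the homogeneous bar resolution itself as the underlying ``space'': continuous cohomology $H^n(G,L^2(G))$ is first replaced by the cohomology of $\Lloc(G^{\ast+1},L^2(G))^G$ (Blanc's smoothing, Theorem~\ref{thm: iso from continuous to loc}), and then an explicit evaluation-at-$e$ map $E^\ast$ with inverse $M^\ast$ identifies this complex, as a cochain complex, with the coarse complex $\CX^\ast(G)$ of measurable functions on $G^{\ast+1}$ that are square-integrable on each controlled subset $G_R^{\ast+1}$ (Theorem~\ref{iso of continuous and coarse coho}). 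No contractibility or uniform contractibility of any auxiliary space is needed, because the bar resolution is already a resolution for every $G$. Your second step (Pansu's controlled-cochain comparison under a coarse equivalence, implemented by averaging against bump functions) matches the paper's Theorem~\ref{qi invariance of HX} in spirit and would survive; but as written your first step only goes through under a type-$F_\infty$-like hypothesis, so the proposal does not prove the stated theorem.
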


The coarse invariance for discrete groups was proven earlier 
in a paper of Mimura-Ozawa-Sako-Suzuki~\cite{taka}*{Corollary~6.3}. 

Every locally compact, second countable group $G$ (hereafter abbreviated by $\textbf{lcsc}$) has a left-invariant proper continuous metric by a theorem of Struble~\cite{struble}. As any two left-invariant proper continuous metrics on $G$ are coarsely equivalent, every lcsc group has a well defined coarse geometry.     
Further, any coarse equivalence between compactly generated lcsc groups is a quasi-isometry with respect to word metrics of compact symmetric generating sets and vice versa. In particular, a coarse equivalence between finitely generated discrete groups is a quasi-isometry. See~\cite{cornulier-book}*{Chapter~4} for a systematic discussion of these notions. 

To even state Theorem~\ref{thm: main} in that generality, recent advances in the theory of $\ell^2$-Betti numbers were necessary. $\ell^2$-Betti numbers of 
discrete groups enjoy a long history but it was not until recently that $\ell^2$-Betti numbers were defined for arbitrary unimodular lcsc groups by Petersen~\cite{petersen}, and a systematic theory analogous to the discrete case emerged~\citelist{\cite{petersen}\cite{petersen+kyed+vaes}\cite{petersen+sauer+thom}}. 
Earlier studies of $\ell^2$-Betti numbers
of locally compact groups in specific cases can be found
in~\cites{gaboriau-unimodular, dymara, davisetal}.

\subsubsection*{Previous results on coarse invariance}
Pansu~\cite{pansu} introduced asymptotic $\ell^p$-co\-homo\-logy of discrete groups and proved
its invariance under quasi-isometries. If a group $\Gamma$ is of type $F_\infty$, then the $\ell^p$-cohomology of $\Gamma$ coincides with its asymptotic $\ell^p$-cohomology~\cite{pansu}*{Th\'{e}or\`{e}me~1}. The geometric explanation for the appearance of the type $F_\infty$ condition is that the finite-dimensional skeleta of the universal covering of a classifying space of finite type are uniformly contractible. As an immediate consequence of Pansu's result, the vanishing of $\ell^2$-Betti numbers is a quasi-isometry invariant among discrete groups of type $F_\infty$. The same arguments work for 
 totally disconnected groups admitting a topological model of finite type~\cite{sauer-survey}.

Elek~\cite{elek} investigated the relation between $\ell^p$-cohomology of discrete groups
and Roe's coarse cohomology and proved
similar results. Another independent treatment is due to Fan~\cite{fan}. Genton~\cite{genton} elaborated upon Pansu's methods in the case of metric measure spaces. 

Oguni~\cite{oguni} generalised the quasi-isometry invariance of the vanishing of $\ell^2$-Betti numbers from discrete groups of type $F_\infty$ to discrete groups whose cohomology with coefficients in the group von Neumann algebra satisfies a certain technical condition. A similar technical condition appears in the
proof of quasi-isometry invariance of Novikov-Shubin invariants of amenable groups~\cite{sauer-homology}*{}, and it is unclear how much this condition differs from the type $F_\infty$-condition.
Oguni's groupoid approach is inspired by~\cites{gaboriau, sauer-homology} and quite different from the approaches by Elek, Fan, and Pansu. 

The coarse invariance of vanishing of $\ell^2$-Betti numbers for discrete groups was shown by Mimura-Ozawa-Sako-Suzuki~\cite{taka}*{Corollary~6.3}. 
Li~\cite{li} recently reproved this using groupoid techniques as a consequence of more general cohomological coarse invariance results.

\subsubsection*{Structure of the paper} We review the necessary basics of $\ell^2$-Betti numbers and continuous cohomology in Section~\ref{sec: continuous cohomology}. 
In Section~\ref{sec: coarse cohomology} we define coarse $\ell^2$-cohomology for lcsc groups and show that it is isomorphic to continuous cohomology. In Section~\ref{sec: quasiisometry} we conclude the proof of Theorem~\ref{thm: main} and discuss what fails for non-unimodular groups.

\section{Continuous cohomology and $\ell^2$-Betti numbers of lcsc groups}\label{sec: continuous cohomology}

Let $G$ be a unimodular lcsc group with Haar measure $\mu$. 
Let $X$ be a locally compact second countable space with Radon measure $\nu$. Let $E$ be a Fr\'{e}chet space. 

The space $C(X, E)$ of continuous functions from $X$ to $E$ becomes 
a Fr\'{e}chet space when endowed with the topology of compact convergence. 
Let $\Lloc(X, E)$ be the space of 
equivalence classes of measurable maps $f\colon X\to E$ up to $\nu$-null sets such that $||f\vert_K||_E$ 
is square-integrable for every compact subset $K\subset X$. The $L^2$-norm of the function $||f\vert_K||_E$ defines a semi-norm $p_K$ on $\Lloc(X, E)$. The family of semi-norms $p_K$, $K\subset E$, turns 
$\Lloc(X, E)$ into a Fr\'{e}chet space. 

We call a Fr\'{e}chet space $E$ with a continuous (i.e.~$G\to E$, $g\mapsto gv$,	 is continuous for every $v\in E$) linear $G$-action a \emph{$G$-module}.  A continuous linear $G$-equivariant map between $G$-modules is a \emph{homomorphism of $G$-modules}. 
If $E$ is a $G$-module and $G$ acts continuously and $\nu$-preserving on $X$ then $C(X, E)$ and 
$\Lloc(X, E)$ become $G$-modules via 
$(g\cdot f)(x)=gf(g^{-1}x)$ for $x\in X$ and $g\in G$~\cite{blanc}*{Proposition~3.1.1}. 
The usual homogeneous coboundary map
    \begin{equation}\label{eq: coboundary operator}
    d^{n-1} f(g_0,...,g_n)=\sum_{i=0}^n(-1)^i f(g_0,...,\widehat{g_i},...,g_n)
    \end{equation}
defines cochain complexes $C(G^{*+1},E)$ and $\Lloc(G^{\ast +1},E)$ of $G$-modules (cf.~\cite{blanc}*{Proposition~3.2.1}). Here we take the diagonal $G$-action on $G^{\ast+1}$. We recall the following definition. 
\begin{defn}
The \emph{(continuous) cohomology} of $G$ with coefficients in $E$ is the 
cohomology \[ H^n(G, E)=H^n\bigl( C(G^{*+1},E)^G\bigr)\]
of the $G$-invariants of $C(G^{*+1},E)$. The \emph{reduced (continuous) cohomology} $\underline{H}^*(G,E)$ is a quotient of $H^\ast(G,E)$ obtained by taking the quotient with the closure of $\im d^{\ast-1}$ instead of $\im d^{\ast-1}$.
\end{defn}

We have an obvious inclusion 
\begin{equation}\label{eq: comparision map}
 I^*:C(G^{*+1},E)\to L_{loc}^2(G^{*+1},E).
 \end{equation}
The maps $I^\ast$ form a cochain map of $G$-modules. Taking 
a positive function $\chi\in C_c(G)$ there is a cochain map $R^*:L_{loc}^2(G^{*+1},E)\to C(G^{*+1},E)$ of $G$-modules 
\[ (R^nf)(g_0,...,g_n)=\int_{G^{n+1}}f(h_0,...,h_n)\chi(g_0^{-1}h_0)\cdot ...\cdot\chi(g_n^{-1}h_n)d\mu(h_0,...,h_n) \]
such that $I^*\circ R^*$ and $R^*\circ I^*$ are homotopic (as cochain maps of $G$-modules) to the identity \cite[Proposition 4.8]{blanc}. 
So we have the following useful fact: 

\begin{thm}\label{thm: iso from continuous to loc}
The cochain map $I^\ast$ in~\eqref{eq: comparision map} induces 
isomorphisms in cohomology and in reduced cohomology. 
\end{thm}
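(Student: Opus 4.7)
The plan is to invoke the chain homotopy equivalence already given between the two cochain complexes, restrict everything to the subcomplex of $G$-invariants, and then descend to reduced cohomology by a continuity argument.

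First, I would unpack what the cited Proposition~4.8 of Blanc actually delivers: the cochain maps $I^*$ and $R^*$ of $G$-modules satisfy that $I^*\circ R^*$ and $R^*\circ I^*$ are cochain-homotopic to the identity via homotopies $h^*$ that are themselves morphisms of $G$-modules, i.e.\ continuous linear $G$-equivariant maps. Since such a homotopy is $G$-equivariant, it restricts to the subcomplexes of $G$-invariants, and so do $I^*$ and $R^*$ themselves. Thus the restrictions of $I^*$ and $R^*$ become mutually homotopy-inverse cochain maps between $C(G^{*+1},E)^G$ and $\Lloc(G^{*+1},E)^G$. Passing to cohomology yields the unreduced half of the statement, namely $H^n(G,E)\cong H^n\bigl(\Lloc(G^{*+1},E)^G\bigr)$.

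Second, for the reduced version, the observation is that the restrictions of $I^*$ and $R^*$ to $G$-invariants are continuous linear and intertwine the coboundaries, so each of them sends $\overline{\im d^{*-1}}$ into $\overline{\im d^{*-1}}$ in the target complex. Hence both descend to well-defined linear maps between the quotients $\ker d^*/\overline{\im d^{*-1}}$ that define reduced cohomology. The homotopy identity $I^*\circ R^* - \id = d\circ h + h\circ d$ (and its counterpart for $R^*\circ I^*$) shows that on any cocycle $\omega$ the difference $I^*(R^*(\omega))-\omega$ is an honest coboundary, hence a fortiori zero in reduced cohomology. This produces the isomorphism $\underline{H}^n(G,E)\cong \underline{H}^n\bigl(\Lloc(G^{*+1},E)^G\bigr)$.

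The only point that really needs verifying, rather than being treated as formal bookkeeping, is that the homotopy provided by Blanc is genuinely $G$-equivariant and not just continuous linear, so that it restricts to the subcomplex of invariants; this is precisely what ``homotopic as cochain maps of $G$-modules'' in the excerpt affirms. Everything else is routine manipulation of Fr\'{e}chet cochain complexes of $G$-modules.
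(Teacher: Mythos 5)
Your proposal is correct and follows essentially the same route as the paper, which states the theorem as an immediate consequence of Blanc's Proposition~4.8: since $I^*\circ R^*$ and $R^*\circ I^*$ are homotopic to the identity through homotopies that are morphisms of $G$-modules, everything restricts to the $G$-invariant subcomplexes and induces isomorphisms on both cohomology and reduced cohomology. Your extra care about the homotopies being $G$-equivariant and about continuous chain maps preserving closures of coboundaries is exactly the bookkeeping the paper leaves implicit.
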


Next we turn to the case where the coefficient module $E=L^2(G)$ is the 
regular representation, relevant for the definition of $\ell^2$-Betti numbers. 

Let $L(G)$ be the \emph{von Neumann algebra of $G$}; the Haar measure $\mu$ defines a semifinite trace $\trace_\mu$ on $L(G)$. There are a natural left $G$-action and a natural right $L(G)$-action on $L^2(G)$, and the two actions commute. Hence also the $G$-actions on 
$C(G^{\ast+1}, L^2(G))$ and $\Lloc(G^{\ast+1}, L^2(G))$ considered previously and the $L(G)$-actions induced from the right $L(G)$-action on $L^2(G)$ commute. So the (reduced and non-reduced) continuous cohomology of $G$ with coefficients in $L^2(G)$ is naturally a $L(G)$-module\footnote{When talking about $L(G)$-modules we mean the algebraic module structure and ignore topologies.}. Obviously, the cochain map $I^\ast$ above is compatible with the $L(G)$-module structures. The groups 
	$H^\ast(G, L^2(G))$ are called the \emph{(continuous)} $\ell^2$-cohomology of $G$. Similarly for the reduced cohomology. 

  Petersen~\cite{petersen} extended L\"uck's dimension function from finite von Neumann algebras to semifinite von Neumann algebras. The dimension function $\dim_\mu$ with respect to $(G,\mu)$ is a non-trivial dimension for (algebraic) right $L(G)$-modules that is additive for short exact sequences of $L(G)$-modules.
    It scales as $\dim_{c\mu}=c^{-1}\dim_{\mu}$ for $c>0$.
    The fact that a $L(G)$-module has dimension zero can be expressed without referring to the trace: it is an algebraic fact.
    The following criterion was shown by the first author for finite von Neumann algebras~\cite{sauer-groupoids}*{Theorem~2.4}; it was extended to the semifinite case by Petersen~\cite{petersen}*{Lemma~B.27}.

    \begin{thm}\label{thm: local criterion}
	    An $L(G)$-module $M$ satisfies $\dim_\mu(M)=0$ if and only if
	    for every $x\in M$ there is an increasing sequence $(p_i)$ of projections in $L(G)$ with $\sup p_i=1$
	    such that $xp_i=0$ for every $i\in \N$.
    \end{thm}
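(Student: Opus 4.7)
I would reduce to cyclic modules and then analyze right ideals of $L(G)$ via weak closure and spectral calculus. Since $\dim_\mu$ is additive on short exact sequences and continuous under directed unions of submodules, $\dim_\mu(M)=0$ if and only if $\dim_\mu(xL(G))=0$ for every $x\in M$. Each cyclic submodule is isomorphic to $L(G)/I$, where $I=\operatorname{Ann}(x)=\{a\in L(G):xa=0\}$ is a right ideal; moreover, the local criterion for all elements of $M$ passes to all elements of every cyclic submodule because annihilators in $M$ agree with annihilators in $xL(G)$. Thus it suffices to prove the equivalence in the cyclic case.

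For the direction ``$\Rightarrow$'', the weak closure $\bar I$ is a weakly closed right ideal, hence of the form $eL(G)$ for a projection $e\in L(G)$. The surjection $L(G)/I\twoheadrightarrow L(G)/\bar I\cong(1-e)L(G)$ together with additivity of $\dim_\mu$ gives $\tau_\mu(1-e)=\dim_\mu((1-e)L(G))\leq\dim_\mu(L(G)/I)=0$; faithfulness of the trace on projections then forces $e=1$, so $I$ is weakly dense in $L(G)$. Combining Kaplansky density with separability of the predual of $L(G)$ (which holds since $G$ is second countable) produces positive contractions $b_n\in I$ with $b_n\to 1$ strongly. For every $\delta>0$ the spectral projection satisfies
\[\chi_{[\delta,\infty)}(b_n)=b_n\cdot g_\delta(b_n)\in I,\qquad g_\delta(\lambda)=\lambda^{-1}\chi_{[\delta,\infty)}(\lambda),\]
since $g_\delta$ is a bounded Borel function and $I$ is a right ideal. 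Combining these spectral projections with an exhaustion of $L(G)$ by finite-trace projections $f_k\uparrow 1$ (available by semifiniteness of $\tau_\mu$) and a diagonal argument yields an increasing sequence $(p_i)\subset I$ with $\sup p_i=1$.

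For the direction ``$\Leftarrow$'', assume every element of $L(G)/I$ satisfies the local criterion and suppose for contradiction that a nonzero finitely generated projective submodule $P\subseteq L(G)/I$ exists. By a standard reduction (any nonzero f.g.\ projective module over $L(G)$ contains a cyclic rank-one projective submodule $pL(G)$ for a nonzero projection $p$), we may take $P=pL(G)$, realized by $p\mapsto[a]$ for some $a\in L(G)$; the embedding forces $\{c\in L(G):ac\in I\}=(1-p)L(G)$. The local criterion applied to $[a]$ furnishes projections $q_i\uparrow 1$ in $L(G)$ with $aq_i\in I$, hence $q_i\in(1-p)L(G)$, i.e.\ $pq_i=0$ for every $i$. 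In $L(G)$ one has $q_i\to 1$ strongly, so $pq_i\to p$ strongly; together with $pq_i=0$ this forces $p=0$, a contradiction. Hence $\dim_\mu(L(G)/I)=0$.

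\textbf{Main obstacle.} The principal technical challenge is the semifinite setting: $\tau_\mu(1)$ may be infinite, so the trace-approximation arguments of the finite case do not transfer directly. All steps must be intrinsic to the projection lattice of $L(G)$, relying on faithfulness and normality of the trace, Kaplansky density, and separability of the predual. The most delicate point is producing an \emph{increasing} sequence of projections inside the right ideal $I$, which itself is not closed under finite suprema of projections; this is the step where one must combine semifiniteness of $\tau_\mu$ with Sauer's finite-trace criterion in the finite corners $f_kL(G)f_k$, and it is precisely why the result needs Petersen's extension of L\"uck's dimension theory to semifinite von Neumann algebras.
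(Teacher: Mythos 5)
First, a point of comparison: the paper itself gives no proof of this statement --- it is quoted from \cite{sauer-groupoids}*{Theorem~2.4} for finite von Neumann algebras and from \cite{petersen}*{Lemma~B.27} for the semifinite extension --- so your argument can only be measured against those sources. Your reduction to cyclic modules $xL(G)\cong L(G)/\operatorname{Ann}(x)$ and your ``$\Leftarrow$'' direction are sound: embedding a putative nonzero finitely generated projective submodule as $pL(G)\hookrightarrow L(G)/I$ and killing $p$ against projections $q_i\uparrow 1$ with $pq_i=0$ is exactly the right argument, and it uses only that $q_i\to 1$ strongly.

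The ``$\Rightarrow$'' direction has a genuine gap, located exactly where you place the ``main obstacle'', and the tools you name do not close it. (i) Kaplansky density applies to $*$-subalgebras; $I=\operatorname{Ann}(x)$ is only a right ideal and is not $*$-closed, so it does not produce \emph{positive} contractions $b_n\in I$ converging strongly to $1$ --- and your identity $\chi_{[\delta,\infty)}(b_n)=b_ng_\delta(b_n)$ needs $b_n=b_n^*$. What weak density of $I$ actually yields, via $\chi_{[\delta,\infty)}(aa^*)=a\bigl(g_\delta(a^*a)a^*\bigr)\in I$ for arbitrary $a\in I$, is that the supremum of all projections contained in $I$ equals $1$. (ii) The corner trick fails structurally: $f_kIf_k\not\subseteq I$, because compressing requires left multiplication, which a right ideal does not absorb; projections produced by Sauer's finite-trace criterion inside $f_kL(G)f_k$ therefore need not annihilate $x$. (iii) Most seriously, no diagonal argument upgrades a family of mutually incomparable projections in $I$ to an \emph{increasing} one: the supremum $p\vee q$ of two projections of $I$ need not lie in $I$ (take $I=pA+qA$ in a product of matrix algebras with the angle between $p$ and $q$ degenerating componentwise). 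The missing lemma is concrete: for projections $p,q\in I$ the positive element $a=p+(1-p)q(1-p)$ lies in $I$ (because $(1-p)q(1-p)=q(1-p)-p\cdot q(1-p)$ and a right ideal absorbs right multiplication), its spectral projections $\chi_{[\delta,\infty)}(a)=ag_\delta(a)$ lie in $I$, dominate $p$, and increase to $p\vee q$ as $\delta\downarrow 0$; running this recursively against an exhaustion $f_k\uparrow 1$ by finite-trace projections and using normality and faithfulness of $\trace_\mu$ produces the increasing sequence. Without this (or an equivalent) construction the forward implication is not proved.
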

\begin{defn}
    The $n$-th $\ell^2$-\emph{Betti number of} $G$ is the
    $L(G)$-dimension of its reduced continuous cohomology with coefficients in $L^2(G)$, i.e.
    \[ \betti^n(G):=\dim_\mu \underline{H}^n(G,L^2(G))\in [0,\infty].\]
\end{defn}
\begin{rem}\label{rem: equivalent definitions of l2-Betti}
Equivalently, the $n$-th $\ell^2$-Betti number can be defined
as the $L(G)$-dimension of the
non-reduced cohomology $H^n(G, L^2(G))$. This is a non-trivial fact  (see~\cite{petersen+kyed+vaes}*{Theorem~A}). For discrete $G$, our definition coincides with L\"uck's definition in~\cite{luck}. Again, this is non-trivial and shown in~\cite{peterson+thom}*{Theorem~2.2}.
\end{rem}

The following lemma was observed in~\cite{petersen}*{Proposition~3.8}.
Since it is a direct consequence of Theorem~\ref{thm: local criterion} we present the argument.

\begin{lem}\label{lem: vanishing of reduced cohomology and l2 betti}
$\betti^n(G)=0 \Leftrightarrow \underline{H}^n(G, L^2(G))=0.$
\end{lem}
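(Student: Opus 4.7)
The implication ``$\Leftarrow$'' is immediate since the $L(G)$-dimension of the zero module is zero. So the whole content is in the implication ``$\Rightarrow$'', which I plan to deduce from Theorem~\ref{thm: local criterion} applied to the $L(G)$-module $\underline{H}^n(G,L^2(G))$.

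Assume $\betti^n(G)=0$ and let $[x]\in \underline{H}^n(G,L^2(G))$ be represented by a cocycle $x\in C(G^{n+1},L^2(G))^G$. By Theorem~\ref{thm: local criterion}, there exists an increasing sequence of projections $p_i\in L(G)$ with $\sup p_i=1$ such that $[x]\,p_i=0$ in $\underline{H}^n(G,L^2(G))$ for all $i$. I would first check that this makes sense: the right $L(G)$-action on $L^2(G)$ commutes with the diagonal left $G$-action, the coboundary $d^{n-1}$ commutes with this right action (its defining formula only permutes arguments), and right multiplication by a bounded operator is continuous for the Fr\'echet topology on $C(G^{n+1},L^2(G))$; hence both $\ker d^n$ and $\overline{\im d^{n-1}}$ are right $L(G)$-submodules of $C(G^{n+1},L^2(G))^G$, and the condition $[x]\,p_i=0$ translates into $x\,p_i\in \overline{\im d^{n-1}}$.

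The heart of the proof is then to show that $x\,p_i\to x$ in the Fr\'echet topology of $C(G^{n+1},L^2(G))$, since $\overline{\im d^{n-1}}$ is closed and will therefore contain the limit $x$, giving $[x]=0$ and hence $\underline{H}^n(G,L^2(G))=0$. The seminorms defining the Fr\'echet topology are $p_K(f)=\sup_{g\in K}\lVert f(g)\rVert_{L^2(G)}$ for $K\subset G^{n+1}$ compact, so the task reduces to showing that
\[
  \sup_{g\in K}\bigl\lVert x(g)(1-p_i)\bigr\rVert_{L^2(G)}\xrightarrow[i\to\infty]{}0.
\]
For a fixed $v\in L^2(G)$, the orthogonal decomposition $v=vp_i+v(1-p_i)$ gives $\lVert v(1-p_i)\rVert^2=\lVert v\rVert^2-\lVert vp_i\rVert^2$, which is monotone decreasing in $i$ and tends to $0$ because $p_i\uparrow 1$ strongly. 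Since $x$ is continuous and $K$ is compact, $x(K)\subset L^2(G)$ is compact, and the continuous functions $v\mapsto \lVert v(1-p_i)\rVert$ decrease pointwise to $0$ on $x(K)$; Dini's theorem upgrades this to uniform convergence on $x(K)$, yielding $p_K(xp_i-x)\to 0$.

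The only step that requires care is this last passage from strong convergence of the $p_i$ to uniform convergence on the compact set $x(K)$, and Dini combined with the orthogonality identity above is what makes it work. Everything else is bookkeeping with the module structure and the continuity of the right $L(G)$-action on $C(G^{n+1},L^2(G))$.
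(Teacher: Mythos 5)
Your proposal is correct and follows essentially the same route as the paper: apply Theorem~\ref{thm: local criterion} to the $L(G)$-module $\underline{H}^n(G,L^2(G))$ to get projections $p_i\uparrow 1$ with $x\,p_i\in\overline{\im\, d^{n-1}}$, then conclude from $x\,p_i\to x$ that $[x]=0$. The only difference is one of detail: the paper dismisses the convergence $x\,p_i\to x$ in $C(G^{n+1},L^2(G))$ as clear, whereas you justify it carefully via the orthogonality identity and Dini's theorem on the compact image $x(K)$ (and you are also slightly more precise in noting that $x\,p_i$ lies in the closure of the coboundaries rather than being an actual coboundary).
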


\begin{proof}
Let $\betti^n(G)=0$. Let $f\colon G^{n+1}\to L^2(G)$ be a cocycle representing a cohomology class $[f]$ in $\underline{H}^n(G, L^2(G))$.
By Theorem~\ref{thm: local criterion} there is an increasing sequence of projections $p_j\in L(G)$ whose supremum is $1$ such that each $fp_j$ is a coboundary $d^{n-1}b_j$. It is clear that $fp_j=d^{n-1}b_j$ converges to $f$ in the topology of $C(G^{n+1}, L^2(G))$, thus $[f]=0$.
\end{proof}

\section{Coarse equivalence and coarse $\ell^2$-cohomology}\label{sec: coarse cohomology}

Let $G$ be a lcsc group. We fix a left-invariant proper continuous metric $d$ on $G$. Let $\mu$ be a Haar measure on $G$. Let $\mu_{n}$ be the $n$-fold product measure of $\mu$ on $G^n$. 

 For every $R>0$ and $n\in\mathbb{N}_0$ we consider the closed subset
\[G_R^n:=\lbrace(g_0,...,g_{n-1})\in G^{n}\;|\;d(g_i,g_j)\leq R\text{ for all }0\leq i,j\leq n\!-\!1 \rbrace\]
and a family of semi-norms for measurable maps $\alpha\colon G^{n+1}\to\mathbb{C}$ defined by
\[\|\alpha\|_R^2=\int_{G_R^{n+1}}|\alpha(g_0,...,g_n)|^2d\mu_{n+1}\in [0,\infty].\]

Let $\CX^{n}(G)$ be the space of equivalence 
classes (up to $\mu_{n+1}$-null sets) of measurable maps $\alpha\colon G^{n+1}\to\mathbb{C}$  such that $\|\alpha\|_R<\infty$ for every $R>0$. The semi-norms $\|\_\|_R$, $R>0$, turn $\CX^{n}(G)$ into a 
Fr\'{e}chet space. It is straightforward to verify that the 
homogeneous differential~\eqref{eq: coboundary operator} 
yields a well-defined, continuous homomorphism 
$\CX^{n}(G)\to \CX^{n+1}(G)$ (cf.~\cite{genton}*{Proposition~2.3.3}). 
Thus we obtain a cochain complex of Fr\'{e}chet spaces. 

\begin{defn} The \emph{coarse $\ell^2$-cohomology of $G$} is defined as 
\[\HX^n(G)=H^n\bigl(\CX^\ast(G)\bigr).\]
By taking the quotients by the closure of the differentials, one defines similarly the \emph{reduced coarse $\ell^2$-cohomology} $\underline{H}X_{(2)}^n(G)$. 
\end{defn}

\begin{rem}
The previous definition is the continuous analog of Elek's definition~\cite{elek}*{Definition~1.3} in the discrete case (Elek gives credits to Roe~\cite{roe}). 
It is very much related to Pansu's \emph{asymptotic $\ell^2$-cohomology}~\cite{pansu}, which was considered in the generality of metric measure spaces 
by Genton~\cite{genton}. The difference of our definition to the one in Genton~\cite{genton} is as follows: $\CX^\ast(G)$ is an inverse limit of spaces $L^2(G^{\ast+1}_R)$. Unlike us, Genton takes first the cohomology of $L^2(G^{\ast+1}_R)$ and then the inverse limit. Under some uniform contractibility assumptions the two definitions coincide but likely not in general. 
\end{rem}

 \begin{thm}\label{thm:iso of continuous and coarse coho}
 Let $G$ be a unimodular lcsc group. For every $n\ge 0$, 
 the $n$-th continuous cohomology of $G$ with coefficients in the left regular representation $L^2(G)$ is isomorphic to the $n$-th coarse $\ell^2$-cohomology of~$G$, and likewise for reduced cohomology. 
  \end{thm}

\begin{proof}
We have the obvious embedding 
\[\Lloc(G^{n+1}, L^2(G))\subset \Lloc(G^{n+1}, \Lloc(G))\] 
and the exponential law (see~\cite{blanc}*{Lemme 1.4} for a proof but beware of the typo in the statement)
\[ \Lloc(G^{n+1}, \Lloc(G))\cong \Lloc(G^{n+1}\times G).\]
Thus an element in $\Lloc(G^{n+1}, L^2(G))^G$ is represented by a 
measurable complex function in $(n+2)$-variables. 
For $\alpha\in \Lloc(G^{n+1}, L^2(G))^G$ we define $\mu_{n+2}$-almost everywhere 
\[F^{n}(\alpha)(x_0,\ldots, x_n, x)=\alpha(x^{-1}x_0,\ldots, x^{-1}x_n)(x).\]
The measurable function $F^{n}(\alpha)$ is invariant by translation in the $(n+2)$-th variable. 
By Fubini's theorem we may regard $F^{n}(\alpha)$ as 
a measurable function $E^{n}(\alpha)\colon G^{n+1}\to\C$ in the first $(n+1)$-variables. We may think of $E^n(\alpha)$ as an evaluation of $\alpha$ at $e$. 
Let $B(R)$ denote the $R$-ball around $e\in G$. 
Next we show 
that $\|E^n(\alpha)\|_R<\infty$ for every $R>0$, thus $E^n(\alpha)\in \CX^n(G)$. 

Since $\alpha\in \Lloc(G^{n+1}, L^2(G))^G$ we have 
\begin{align*}
 \infty&>	 \int_{B(2R)^{n+1}} \int_{G}
		 |\alpha(x_0,x_1,...,x_n)(x)|^2d\mu d\mu_{n+1}\\
	   &= \int_{B(2R)^{n+1}} \int_{G}
		 |\alpha(x,xx_0^{-1}x_1,...,xx_0^{-1}x_n)(x_0)|^2d\mu d\mu_{n+1}.
\end{align*}
The map 
\[m\colon G^{n+2}\to G^{n+2}, (x_0,\ldots, x_n, x)\mapsto (x, xx_0^{-1}x_1, \ldots, xx_0^{-1}x_n, x_0)\]
is measure preserving since it is the composition of taking inverses in the last coordinate, left multiplication by $xx_0^{-1}$, conjugation by $x$ and taking inverses in the last coordinate. 
Note that this requires unimodularity. 
Further, we have 
\[ m\bigl( G_R^{n+1}\times B(R)\bigr)\subset B(2R)^{n+1}\times G.\] 
This implies the first inequality below. The first equality follows from the fact that $(x_0,\dots, x_n,x)\mapsto (x^{-1}x_0, \dots, x^{-1}x_n,x)$ is a measure preserving measurable automorphism of $G_R^{n+1}\times B(R)$. 
\begin{align*}
	   \int_{B(2R)^{n+1}} \int_{G}
		 |\alpha(x,xx_0^{-1}x_1&,...,xx_0^{-1}x_n)(x_0)|^2d\mu d\mu_{n+1}\\
	   &\ge \int_{G_R^{n+1}}\int_{B(R)} |\alpha(x_0, \dots, x_n)(x)|^2d\mu d\mu_{n+1} \\
	   &=\int_{G_R^{n+1}}\int_{B(R)} |\alpha(x^{-1}x_0, \dots, x^{-1}x_n)(x)|^2d\mu d\mu_{n+1} \\
	   &=\mu(B(R))\|E^n(\alpha)\|_R.
\end{align*}

Hence $\|E^n(\alpha)\|_R$ is finite for every $R>0$. 
That 
\[E^\ast\colon \Lloc(G^{\ast+1}, L^2(G))^G\to \CX^\ast(G)\] 
defines a cochain map is obvious. The above computation also implies that 
$E^\ast$ is continuous with respect to the Fr\'{e}chet topologies. 

Given $\beta\in \CX^n(G)$ we define 
\[M^n(\beta)(g_0,\ldots, g_n)(g)=\beta(g^{-1}g_0, \ldots, g^{-1}g_n)\]
for $\mu_{n+2}$-almost every $(g_0,\ldots, g_n,g)$. The function 
$M^n(\beta)$ defines an element in $\Lloc(G^{n+1}, L^2(G))^G$. The $G$-invariance of $M^n(\beta)$ is obvious. We have to show that $\|M^n(\beta)\vert_{B(R)^{n+1}}\|$ is square-integrable for every $R>0$. This  follows from the following computations which is based on the arguments above in reversed order. 

\begin{align*}
\mu(B(R))\int_{G_{2R}^{n+1}} |&\beta(g_0,...,g_n)|^2 d\mu_{n+1}\\
	      & =\int_{G_{2R}^{n+1}}\int_{B(R)} |\beta(g_0,...,g_n)|^2 d\mu d\mu_{n+1}\\
	      & \ge\int_{B(R)^{n+1}}\int_G |\beta(g^{-1}g_0,\dots, g^{-1}g_n)|d\mu  d\mu_{n+1}
\end{align*}
Obviously, $M^\ast$ is a chain map. Continuity follows from the previous computation. It is clear that $M^\ast$ and $E^\ast$ are mutual inverses. Using 
Theorem~\ref{thm: iso from continuous to loc}, this concludes the proof. 
\end{proof}

\section{Coarse invariance}\label{sec: quasiisometry}
We recall the notion of coarse equivalence. A map $f\colon (X,d_X)\to (Y,d_Y)$ between metric spaces is \emph{coarse Lipschitz} if there 
is a non-decreasing function $a\colon [0,\infty)\to [0,\infty)$ with $\lim_{t\to\infty} a(t)=\infty$ such that 
\[ d_Y(f(x), f(x'))\le a(d(x,x'))\] 
for all $x,x'\in X$. We say that two such maps $f,g$ are \emph{close} 
if \[\sup_{x\in X} d_Y(f(x), g(x))<\infty.\]  
A coarse Lipschitz map $f\colon X\to Y$ is a \emph{coarse equivalence} if there is a coarse Lipschitz map $g\colon Y\to X$ such that $fg$ and $gf$ are close to the identity. We say $g$ is a \emph{coarse inverse} of $f$. 
\begin{lem}\label{coarse equivalences are measurable}
Coarsely equivalent lcsc groups are measurably coarse equivalent, i.e.~if $G$ and $H$ are coarse equivalent lcsc groups then there are \emph{measurable} coarse Lipschitz maps $f\colon G\to H$ and $g\colon H\to G$ such that $fg$ and $gf$ are close to the identity.
\end{lem}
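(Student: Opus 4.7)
The plan is to modify the given coarse Lipschitz maps by composing them on the \emph{source} side with a Borel ``snap-to-grid'' map. The resulting maps will automatically be Borel because they take values in a countable set with Borel preimages, and they will be close enough to the originals to preserve the coarse Lipschitz and coarse-inverse properties.

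First I would construct the snapping map $\sigma_G\colon G\to G$. Since $G$ is separable, choose a dense sequence $(x_n)_{n\in\N}$ and set $U_n:=B(x_n,1/2)$. The Borel sets $B_n := U_n\setminus\bigcup_{k<n}U_k$ form a partition of $G$ into relatively compact pieces of diameter at most $1$. Define $\sigma_G(x)=x_n$ whenever $x\in B_n$. This map is Borel, takes values in the countable set $\{x_n\colon n\in\N\}$, and satisfies $d_G(x,\sigma_G(x))\le 1$ everywhere. Performing the same construction on $H$ yields $\sigma_H\colon H\to H$ with an analogous sequence $(y_n)$.

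Next I would set $f':=f\circ\sigma_G$ and $g':=g\circ\sigma_H$. Each takes values in a countable subset of the target ($\{f(x_n)\}$ or $\{g(y_n)\}$), and the preimage of any point is a countable union of the Borel pieces $B_n$, so $f'$ and $g'$ are Borel. If $a$ is a control function for $f$, then
\[ d_H(f'(x), f'(x'))\le a\bigl(d_G(\sigma_G(x),\sigma_G(x'))\bigr)\le a(d_G(x,x')+2),\]
so $f'$ is coarse Lipschitz with control $t\mapsto a(t+2)$; symmetrically for $g'$. Moreover $d_H(f'(x),f(x))\le a(1)$ is uniformly bounded, so $f'$ is close to $f$, and analogously $g'$ is close to $g$. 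Since closeness is preserved by post-composition with a coarse Lipschitz map, we conclude $f'g'\sim fg'\sim fg\sim \id_H$ and $g'f'\sim \id_G$ via the triangle inequality.

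There is no genuinely hard step; the only point that needs care is the order of composition. Composing the snapping map on the source side forces the image of $f'$ into the countable set $\{f(x_n)\}$, making measurability automatic without any hypothesis on $f$. Composing on the target side (i.e.\ $\sigma_H\circ f$) would instead require $f$ itself to already be Borel, which is not given. Apart from this observation, the verification is routine.
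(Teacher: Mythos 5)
Your proposal is correct and is essentially the paper's own argument: the authors likewise pick a countable measurable partition of $G$ into pieces of diameter at most $t$ and redefine $f$ to be constantly $f(x_U)$ on each piece $U$, which is exactly your precomposition with the snap-to-grid map $\sigma_G$ (your explicit disjointification of balls around a dense sequence just makes the existence of such a partition concrete). The resulting estimates, the control function $t\mapsto a(t+2t)$, and the closeness argument all match.
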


\begin{proof}
We choose left-invariant continuous proper metrics $d_G$ and $d_H$ on $G$ and $H$, respectively. Let $f\colon G\to H$ be a coarse Lipschitz map with $d_H(f(x), f(x'))\leq a(d_G(x,x'))$. Let $t>0$. We pick a countable measurable partition $\mathcal{U}$ of $G$ whose elements have diameter $\le t$ and choose an element $x_U\in U$ for every $U\in \mathcal{U}$.

By setting $\tilde{f}(x)=f(x_U)$ for $x\in U$ we obtain a coarse Lipschitz map $\tilde{f}\colon G\to H$ which satisfies 
$d(\tilde{f}(x),\tilde{f}(x'))\le a(d(x,x')+2t)$
and is close to $f$ with $d(\tilde{f}(x),f(x))\leq a(2t)$. Analogously, we construct  a measurable coarse Lipschitz map $\tilde{g}$, constructed from a coarse Lipschitz map $g\colon H\to G$ which is a coarse inverse to $f$. It is obvious that $\tilde{g}$ is a coarse inverse to $\tilde{f}$.
\end{proof}
\begin{thm}\label{qi invariance of HX}
Coarsely equivalent lcsc groups have isomorphic reduced and non-reduced coarse $\ell^2$-cohomology groups.
\end{thm}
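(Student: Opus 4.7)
The strategy is to construct chain maps $\Phi_f \colon \CX^\ast(H) \to \CX^\ast(G)$ and $\Phi_g \colon \CX^\ast(G) \to \CX^\ast(H)$ from measurable representatives of the coarse equivalence, and then show they are mutually inverse in (reduced) cohomology by exhibiting explicit chain homotopies. First I would apply Lemma~\ref{coarse equivalences are measurable} to obtain measurable coarse Lipschitz maps $f \colon G \to H$ and $g \colon H \to G$ with control functions $a_f, a_g$ and a constant $K>0$ such that $fg$ and $gf$ are $K$-close to $\mathrm{id}_H$ and $\mathrm{id}_G$ respectively. Fix nonnegative bump functions $\chi_G \in C_c(G)$ and $\chi_H \in C_c(H)$ with $\int \chi_G\, d\mu = \int \chi_H\, d\nu = 1$ (where $\nu$ is Haar on $H$) and compact supports in balls of radius $\rho$ around the identities. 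Define the smoothed pullback by
\[ (\Phi_f \beta)(g_0, \ldots, g_n) = \int_{H^{n+1}} \beta(h_0, \ldots, h_n) \prod_{i=0}^n \chi_H(f(g_i)^{-1} h_i)\, d\nu_{n+1}(h), \]
and $\Phi_g$ analogously. Raw pullback $\beta \mapsto \beta \circ (f \times \cdots \times f)$ fails because $f_\ast \mu$ need not be absolutely continuous with respect to $\nu$; convolving with $\chi_H$ is the required regularisation.

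To verify $\Phi_f \beta \in \CX^n(G)$, observe that when $(g_0,\ldots,g_n) \in G_R^{n+1}$, the coarse Lipschitz property of $f$ and the support of $\chi_H$ confine the integrand to $(h_0,\ldots,h_n) \in H_{a_f(R)+2\rho}^{n+1}$. Applying Cauchy--Schwarz and Fubini yields
\[ \| \Phi_f \beta \|_R^2 \le C \int_{H_{a_f(R)+2\rho}^{n+1}} |\beta(h)|^2 \cdot \mu_{n+1}\!\bigl( \{ g \in G_R^{n+1} : f(g_i) \in B_H(h_i,\rho)\ \forall i\} \bigr)\, d\nu_{n+1}(h). \]
The crucial point is that because $gf$ is $K$-close to $\mathrm{id}_G$, the preimage $f^{-1}(B_H(h,\rho))$ has diameter at most $2K + a_g(2\rho)$ for every $h \in H$, hence $\mu$-measure at most $\mu(B_G(e, 2K + a_g(2\rho)))$ uniformly in $h$. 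This reduces the estimate to a constant multiple of $\|\beta\|_{a_f(R)+2\rho}^2 < \infty$ and simultaneously shows $\Phi_f$ is continuous between the Fr\'{e}chet topologies. The chain map property $d \circ \Phi_f = \Phi_f \circ d$ is then immediate from $\int \chi_H\, d\nu = 1$: the kernel attached to the ``missing'' variable in $\Phi_f(d\beta)$ integrates to $1$, producing the same alternating sum as $d(\Phi_f \beta)$.

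The main obstacle is constructing a chain homotopy $T \colon \CX^n(H) \to \CX^{n-1}(H)$ with $dT + Td = \Phi_g \Phi_f - \mathrm{id}$, and its symmetric counterpart on $G$. The composition $\Phi_g \Phi_f$ rewrites, after substituting $k_i = g(h_i)m_i$, as an integral kernel operator whose one-variable kernel $k(h,h') = \int \chi_H(f(g(h)m)^{-1}h')\chi_G(m)\,d\mu(m)$ is supported within bounded distance of the diagonal in $H \times H$, because $f(g(h)m)$ stays within $K + a_f(\rho)$ of $h$ for $m \in \mathrm{supp}(\chi_G)$. I would then mimic the classical prism construction for comparing two chain maps by interpolating, coordinate by coordinate, between leaving a variable untouched and transporting it through $fg$:
\[ (T\beta)(h_0,\ldots,h_{n-1}) = \sum_{i=0}^{n-1} (-1)^i \int \beta(h'_0,\ldots,h'_i, h_i, \ldots, h_{n-1})\, \prod_{j=0}^i k(h_j, h'_j)\, d\nu_{i+1}(h'). \]
The computation $dT + Td = \Phi_g\Phi_f - \mathrm{id}$ is a telescoping argument using $\int \chi_G = \int \chi_H = 1$ repeatedly, analogous to the one proving Blanc's homotopy result quoted before Theorem~\ref{thm: iso from continuous to loc}. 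Well-definedness of $T$ on $\CX^\ast(H)$ uses exactly the same uniform preimage bound as for $\Phi_f$, here applied to the kernel $k$. The symmetric construction gives $\Phi_f \Phi_g \simeq \mathrm{id}$, and passing to cohomology and reduced cohomology concludes the proof.
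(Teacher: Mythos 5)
Your proposal follows essentially the same route as the paper: regularised pullbacks $\int_{H^{n+1}}\beta(h)\prod_i\chi_H(f(g_i)^{-1}h_i)\,d\nu_{n+1}$ with a normalised kernel, the well-definedness estimate combining the coarse Lipschitz control with the uniform bound on $\mu\bigl(f^{-1}(B_H(h,\rho))\bigr)$ coming from closeness of $gf$ to the identity, and a prism-type homotopy verified through the simplicial identities. The only divergence is that your homotopy operator uses the composite kernel $k(h,h')=\int_G\chi_H(f(g(h)m)^{-1}h')\chi_G(m)\,d\mu(m)$, which is in fact the form needed for the top relation $h_n^{n+1}\circ d_{n+1}^n=g^n\circ f^n$ to hold literally (the paper's displayed homotopy uses the plain kernel $\chi'$ at that point), so your version is, if anything, the more carefully stated one.
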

\begin{proof}
Let $G$ and $H$ lcsc groups with Haar measures $\mu$ and $\nu$, respectively. Let  $f\colon G\to H$ be a coarse equivalence with coarse inverse $g$. Because of lemma \ref{coarse equivalences are measurable} we can further assume that $f$ and $g$ are measurable. We define a map $\chi\colon G\times G \to \mathbb{R}$ by
\begin{align*}
\chi(x,y)=\frac{\mathds{1}_{B_x(c)}(y)}{\mu(B(c))}
\end{align*}
where we choose $c$ such that $\mu(B(c))\ge 1$. Then $\chi$ is a measurable function with
$\chi(x,y)=\chi(y,x)$ and $\int_G \chi(x,y)d\mu(y)=1$. We use the following notation:
\[\chi\colon G^{n+1}\times G^{n+1}\to \mathbb{R},\;\;\;\chi((x_0,...,x_n),(y_0,...,y_n))=\chi(x_0,y_0)\cdot...\cdot \chi(x_n,y_n).\]
 Analogously, we define $\chi'\colon H^{n+1}\times H^{n+1} \to \mathbb{R}$ with some radius $c'$. Now we can define the maps $f^*\colon \HX^*(H)\to \HX^*(G)$ and $g^*\colon\HX^*(G)\to \HX^*(H)$ as follows where we use $x_i$ for elements in $G$ and $y_i$ for elements of $H$:
\begin{align*}
f^*\alpha(x_0,...,x_n)&=\int_{H^{n+1}} \alpha(y_0,...,y_n)\chi'\left((f(x_0),...,f(x_n)),(y_0,...,y_n)\right)d\nu_{n+1}\\
g^*\beta(y_0,...,y_n)&=\int_{G^{n+1}} \beta(x_0,...,x_n)\chi\left((g(y_0),...,g(y_n)),(x_0,...,x_n)\right)d\mu_{n+1}.
\end{align*}
The idea of averaging over a function like $\chi$ goes back to Pansu; it is necessary in our context since the maps $f$ and $g$ do not preserve the measure classes, in general. 

First of all, we check that these are well-defined continuous cochain maps.
\begin{align*}
&\infty >\|\alpha\|^2_{a(R)+c'}=\int_{H^{n+1}}|\alpha(y_0,...,y_n)|^2\cdot \mathds{1}_{H^n_{a(R)+c'}}d\nu_{n+1} \\
&\ge \int_{H^{n+1}}|\alpha(y_0,...,y_n)|^2\int_{G_R^{n+1}}\chi'\left((f(x_0),...,f(x_n)),(y_0,...,y_n)\right)d\mu_{n+1}d\nu_{n+1}\\
&=\int_{G_R^{n+1}}\int_{H^{n+1}}\left| \alpha(y_0,...,y_n)\right|^2\chi'\left((f(x_0),...,f(x_n)),(y_0,...,y_n)\right)d\nu_{n+1}d\mu_{n+1} \\
&\ge \int_{G_R^{n+1}}\left|\int_{H^{n+1}} \alpha(y_0,...,y_n)\chi'\left((f(x_0),...,f(x_n)),(y_0,...,y_n)\right)d\nu_{n+1}\right|^2d\mu_{n+1} \\
&=\int_{G_R^{n+1}}|f^n\alpha(x_0,...,x_n)|^2d\mu_{n+1} = \|f^n\alpha\|^2_{R}
\end{align*}
It is a direct computation that $d^n\circ f^n=f^{n+1}\circ d^n$.

It remains to show that there is a cochain homotopy $h\colon\CX^*(H)\to \CX^{*-1}(H)$ such that $\id -g^*f^*=hd+dh$. We define $h_i^{n+1}\colon\CX^{n+1}(H)\to \CX^{n}(H)$ by
\begin{align*}
&h_i^{n+1}\alpha(y_0,...,y_n)\\
=&\int_{H^{n+1}}\alpha(\tilde{y}_0,...,\tilde{y}_i,y_i,...,y_n)\chi'((y_0,...,y_n),(\tilde{y}_0,...,\tilde{y}_n))d\nu_{n+1}(\tilde{y})
\end{align*}
and set 
\[h^{n+1}=\sum_{i=0}^n(-1)^ih_i^{n+1}.\]
That $h^*$ is well-defined is a similar consideration as to show that $f^*$ and $g^*$ are well-defined. Now let us denote the i-th term of the coboundary map by $d^n_i$, i.e. $d^n_i\alpha(y_0,...,y_{n+1})=\alpha(y_0,...,\widehat{y_i},...,y_{n+1})$. It is straightforward to verify that we have the following relations:
\begin{align*}
h_{n}^{n+1}\circ d^{n}_{n+1}&= g^n\circ f^n, \\
h_0^{n+1}\circ d^n_{0} 		&= \id_{\CX^n(H)}, \\
h_j^{n+1}\circ d_i^n &= d^{n-1}_i\circ h_{j-1}^n &\text{    for } 1\leq j\leq n \text{ and } i\leq j,& \\
h_j^{n+1} \circ d_i^n & = d_{i-1}^{n-1}\circ h_j^n &\text{    for } 1\leq i \leq n \text{ and } i>j.&
\end{align*}
We get $h^{n+1}d^n+d^{n-1}h^n=\id_{\CX^n(H)} - g^n\circ f^n$.  The same construction applies to $f^*g^*$ which completes the proof.
\end{proof}
\begin{proof}[Proof of Theorem~\ref{thm: main}]
Let $G$ and $H$ be unimodular lcsc groups. Let $G$ and $H$ be coarsely equivalent. Then we have the following equivalences:
\begin{align*}
\beta_{(2)}^2(G)=0&\Leftrightarrow\underline{H}^n(G,L^2(G))=0 &(\text{Lemma }\ref{lem: vanishing of reduced cohomology and l2 betti})\\
&\Leftrightarrow \underline{H}X_{(2)}^n(G)=0 &(\text{Theorem }\ref{thm:iso of continuous and coarse coho}) \\
&\Leftrightarrow \underline{H}X_{(2)}^n(H)=0 &(\text{Theorem }\ref{qi invariance of HX})
\end{align*}
Going the same steps backwards for the group $H$ finishes the proof. 
\end{proof}

\begin{rem}
Since the Borel subgroup $B<\SL_2(\R)$ of upper triangular matrices is cocompact, the solvable Lie groups $B$ 
and $\SL_2(\R)$ are quasi-isometric. So $B$ belongs to the class of amenable hyperbolic lcsc groups of which a systematic study was undertaken in~\cite{amhyp}.

The group $B$ is not unimodular and thus its $\ell^2$-Betti number are not defined. Nevertheless, one may ask what exactly breaks down in the proof above which can be formulated to a large part without the notion of $\ell^2$-Betti numbers. By a result of Delorme~\cite{delorme}*{Corollaire V.3}, we have $\underline{H}^1(B, L^2(B)))=0$. Since Theorem~\ref{qi invariance of HX} does not require unimodularity, we have $\underline{H}X_{(2)}^1(B)\cong \underline{H}X_{(2)}^n(\SL_2(\R))\ne 0$ since $\betti^1(\SL_2(\R))\ne 0$. So it is Theorem~\ref{thm:iso of continuous and coarse coho} that fails for the non-unimodular group $B$. 
\end{rem}

\subsection*{Acknowledgements}
We acknowledge support by the German Science Foundation via the Research Training Group 2229.

\end{document}